\newtheorem{theorem}{Theorem}[section]
\newtheorem{lemma}[theorem]{Lemma}
\newtheorem{corollary}[theorem]{Corollary}
\theoremstyle{definition}
\theoremstyle{remark}
\newtheorem{remark}[theorem]{Remark}
\numberwithin{equation}{section}
\begin{document}
\setcounter{page}{1}

\title{A submetric characterization of Rolewicz's property ($\beta$)}

\author{Sheng Zhang}

\address{School of Mathematics, Southwest Jiaotong University, Chengdu, Sichuan 611756, China}
\email{\textcolor[rgb]{0.00,0.00,0.84}{sheng@swjtu.edu.cn}}


\thanks{The author was supported by National Natural Science Foundation of China, grant numbers 11801469 and 12071389}

\begin{abstract}
The main result is a submetric characterization of the class of Banach spaces admitting an equivalent norm with Rolewicz's property ($\beta$). As applications we prove that up to renorming, property ($\beta$) is stable under coarse Lipschitz embeddings and coarse quotients.
\end{abstract}

\maketitle

\section{Introduction}

Metric characterization of Banach space properties is one of the most intriguing problems in nonlinear geometry of Banach spaces. Being a part of the Ribe program, it aim to seek pure metric interpretation of important concepts in Banach space theory. The first step in this direction is Bourgain's characterization of superreflexivity in terms of Lipschitz embeddability of binary trees \cite{Bourgain1986}. Metric characterizations of other properties such as RNP, reflexivity, Rademacher's type and cotype have been discovered thereafter. We refer to the survey paper \cite{Ostrovskii2016} and references therein for a list of Banach space properties that are known to have such characterization.

The main concern of this paper is Rolewicz's property ($\beta$), which is an asymptotic property of Banach spaces introduced by Rolewicz \cite{Rolewicz1987}. We recall the equivalent definition from \cite{Kutzarova1991} that a Banach space $X$ is said to have property ($\beta$) if for every $t\in(0,a]$, where $a\in[1,2]$ depends only on the space $X$, there exists $\delta=\delta(t)\in(0,1)$ such that whenever $x,x_n\in B_X$ with $\inf_{i\neq j}\|x_i-x_j\|\ge t$, there exists $k\in\mathbb{N}$ such that $$\frac{\|x-x_k\|}{2}\le1-\delta.$$
A modulus for the property ($\beta$) is defined by $$\bar{\beta}_X(t)=1-\sup\left\{\inf_{n\ge1}\left\{\frac{\|x-x_n\|}{2}\right\}:~x,x_n\in B_X,~\inf\limits_{i\neq j}\|x_i-x_j\|\ge t\right\}.$$
Then $X$ has property ($\beta$) if and only if $\bar{\beta}_X(t)>0$ for all $t>0$.

Baudier, Kalton and Lancien proved in \cite{BKL2010} that within the class of reflexive Banach spaces, the class of Banach spaces admitting an equivalent norm with property ($\beta$) can be characterized in terms of Lipschitz embeddability of the countably branching tree. In this paper, our goal is to provide a different metric characterization of property ($\beta$). The characterization we obtained, in the language of Ostrovskii \cite{Ostrovskii2014}, is called a submetric characterization; it combines ideas from \cite{BZ2016,DKR2016} and a self-improvement argument first appeared in \cite{JS2009}.

We gather some necessary definitions and preliminary results in section 2. Section 3 is devoted to the submetric characterization of property ($\beta$). The last section contains two applications regarding the stability of property ($\beta$) under coarse Lipschitz embeddings and nonlinear quotients. We use standard notations: $B_X(x,r)$ denotes the closed ball in a metric space $X$ with center $x$ and radius $r$. If $X$ is a Banach space, $B_X:=B_X(0,1)$ is the closed unit ball of $X$. All Banach spaces are assumed to be real.

\section{Preliminaries}

\subsection{Uniform and coarse continuity}\ 

Let $f:X\to Y$ be a map between metric spaces $X$ and $Y$. The modulus of continuity of $f$ is defined by $$\omega_f(t):=\sup\{d_Y(f(x),f(y)): d_X(x,y)\le t\}.$$ 
The map $f$ is said to be uniformly continuous if $\lim_{t\to0}\omega_f(t)=0$; $f$ is said to be coarsely continuous if  $\omega_f(t)<\infty$ for all $t>0$. The Lipschitz constant of $f$ is defined by $$\text{Lip}(f):=\sup\left\{\frac{d_Y(f(x),f(y))}{d_X(x,y)}: x,y\in X, x\neq y\right\},$$ and $f$ is called a Lipschitz map if $\text{Lip}(f)<\infty$. 
We say $f$ is a Lipschitz embedding if there exists constants $C,L>0$ such that for all $x,y\in X$,
$$\frac{1}{C}d_X(x,y)\le d_Y(f(x),f(y))\le Ld_X(x,y).$$

If $X$ is unbounded, we define $$\text{Lip}_d(f):=\sup\left\{\frac{d_Y(f(x),f(y))}{d_X(x,y)}: x,y\in X, d_X(x,y)\ge d\right\},$$
and say that $f$ is Lipschitz for large distances if $\text{Lip}_d(f)<\infty$ for all $d>0$. If there exists $d>0$ such that $\text{Lip}_d(f)<\infty$, then we say $f$ is coarse Lipschitz. $f$ is said to be a coarse Lipschitz embedding if there exists constants $d\ge0$ and $A,B>0$ such that for all $x,y\in X$ with $d_X(x,y)\ge d$,
$$\frac{1}{A}d_X(x,y)\le d_Y(f(x),f(y))\le Bd_X(x,y).$$

A metric space $X$ is said to be metrically convex if for every $x_0, x_1\in X$ and $0<\lambda<1$, there exists $x_\lambda\in X$ such that $d(x_0,x_\lambda)=\lambda d(x_0, x_1)$ and $d(x_1, x_\lambda)=(1-\lambda)d(x_0, x_1)$. Uniformly continuous maps and coarsely continuous maps defined on metrically convex metric spaces must be Lipschitz for large distances.

\subsection{The countably branching tree}\ 

The countably branching tree of infinite height, denoted by $\mathcal{T}$, is an unweighted rooted tree each of whose vertices has countably infinite edges incident to it. We still use $\mathcal{T}$ to denote its vertex set, then $\mathcal{T}$ has a one-to-one correspondence with all finite subsets of $\mathbb{N}$, where $\emptyset$ is the root of $\mathcal{T}$, and every other vertices can be represented as $(n_1<n_2<...<n_k)$ for some $k\in\mathbb{N}$.
The ancestor-to-descendant relations between vertices is defined as follows: the root $\emptyset$ is the ancestor of all the other vertices; a vertex $I=(m_1<m_2<...<m_l)$ is said to be an ancestor of a vertex $J=(n_1<n_2<...<n_k)$ (or $J$ is a descendant of $I$), written as $I<J$, if $l<k$ and $m_j=n_j$ for all $1\le j\le l$. If $I<J$ and $k=l+1$ then we say $J$ is an immediate descendant of $I$.
The height of a vertex $J$, denoted by $|J|$, is the number of edges jointing $J$ and the root, which is also the number of elements in $J$ viewed as a finite subset of $\mathbb{N}$; here we set $|\emptyset|=0$ by convention. The notation $\max J$ and $\min J$ denote the largest and the smallest natural number in $J$ respectively.
$\mathcal{T}$ will always be equipped with the shortest path metric, i.e., the distance between two vertices $I,J\in\mathcal{T}$ is defined by $$d_{\mathcal{T}}(I,J):=|I|+|J|-2|\text{gca}(I,J)|,$$ where $\text{gca}(I,J)$ is the common ancestor of $I$ and $J$ with the greatest height.

$\mathcal{T}$ has an important property that it is self-contained, meaning that $\mathcal{T}$ contains proper subsets that are rescaled isometric copies of itself. In the lemma below we choose a sequence of such subsets in a way that vertices of the same height are ``separated''.

\begin{lemma}\label{subtree}
There exists a sequence $\{\mathcal{T}_n\}_{n=0}^\infty$ of subsets of $\mathcal{T}$ that satisfies the following:
\begin{enumerate}
\item $\mathcal{T}_n\supseteq\mathcal{T}_{n+1}$ for every $n\in\mathbb{N}\cup\{0\}$.
\item For every $n\in\mathbb{N}\cup\{0\}$, $\mathcal{T}_n$ is $2^n$-isometric to $\mathcal{T}$, i.e., there exists a surjective map $i_n:\mathcal{T}\to\mathcal{T}_n$ such that $d_{\mathcal{T}}(i_n(I),i_n(J))=2^n d_{\mathcal{T}}(I,J)$ for all $I,J\in\mathcal{T}$. Therefore, $\mathcal{T}_n$ can be viewed as a weighted countably branching tree each of whose edges has weight $2^n$.
\item For every $n\in\mathbb{N}\cup\{0\}$ and every vertex $J\in\mathcal{T}_n$, the immediate descendants of $J$ in $\mathcal{T}_n$, denoted by $\{J\cup J_i\}_{i=1}^\infty$ where $|J_i|=2^n$ and $\max J<\min J_i$ for all $i$, satisfies $\max J_i<\min J_{i+1}$ for all $i$.
\end{enumerate}
\end{lemma}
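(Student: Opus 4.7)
My plan is to build $\mathcal{T}_n$ explicitly from a dyadic partition of $\mathbb{N}$. For every $n \ge 0$ and $j \ge 1$, define the block
\[
B_j^{(n)} := \bigl\{(j-1)2^n + 1,\ (j-1)2^n + 2,\ \ldots,\ j\cdot 2^n\bigr\},
\]
so the $B_j^{(n)}$ are disjoint consecutive intervals of size $2^n$ partitioning $\mathbb{N}$. I would then declare
\[
\mathcal{T}_n := \bigl\{ B_{j_1}^{(n)} \cup B_{j_2}^{(n)} \cup \cdots \cup B_{j_k}^{(n)} : k \ge 0,\ 1 \le j_1 < j_2 < \cdots < j_k \bigr\},
\]
where $k=0$ gives the root $\emptyset$. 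Each element is a finite subset of $\mathbb{N}$, hence a vertex of $\mathcal{T}$. The candidate isometric embedding $i_n\colon \mathcal{T} \to \mathcal{T}_n$ is the evident one: $i_n(\emptyset) = \emptyset$ and $i_n((j_1<\cdots<j_k)) := B_{j_1}^{(n)} \cup \cdots \cup B_{j_k}^{(n)}$.

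Next, I would verify (2) and (3) from the construction. For $I=(j_1,\ldots,j_k)$ and $J=(l_1,\ldots,l_m)$ in $\mathcal{T}$ with $|\mathrm{gca}(I,J)|=p$, the blocks indexed by $j_1,\ldots,j_p$ are shared prefixes of both $i_n(I)$ and $i_n(J)$, while the next blocks differ; since the blocks are disjoint intervals stacked in increasing order, $\mathrm{gca}(i_n(I),i_n(J)) = i_n(\mathrm{gca}(I,J))$ and all heights scale by $2^n$, giving $d_{\mathcal{T}}(i_n(I),i_n(J)) = 2^n d_{\mathcal{T}}(I,J)$. Surjectivity onto $\mathcal{T}_n$ is clear from the definition. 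For (3), the immediate descendants of $J = B_{j_1}^{(n)} \cup \cdots \cup B_{j_k}^{(n)}$ in $\mathcal{T}_n$ are exactly $J \cup B_{j_k+i}^{(n)}$ for $i=1,2,\ldots$, so $J_i := B_{j_k+i}^{(n)}$ has $|J_i|=2^n$, $\min J_i = (j_k+i-1)2^n + 1 > \max J$, and $\max J_i = (j_k+i)2^n < \min J_{i+1}$, as required.

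Finally, (1) follows from the self-similar identity $B_{j}^{(n+1)} = B_{2j-1}^{(n)} \cup B_{2j}^{(n)}$, which is an immediate calculation from the definition. Given any $J = B_{l_1}^{(n+1)} \cup \cdots \cup B_{l_k}^{(n+1)} \in \mathcal{T}_{n+1}$ with $l_1 < \cdots < l_k$, I rewrite it as
\[
J = B_{2l_1-1}^{(n)} \cup B_{2l_1}^{(n)} \cup B_{2l_2-1}^{(n)} \cup B_{2l_2}^{(n)} \cup \cdots \cup B_{2l_k-1}^{(n)} \cup B_{2l_k}^{(n)},
\]
and the $2k$ block indices $2l_1-1 < 2l_1 < 2l_2-1 < 2l_2 < \cdots < 2l_k-1 < 2l_k$ are strictly increasing (using $l_i < l_{i+1}$, hence $2l_i \le 2l_{i+1}-2 < 2l_{i+1}-1$), exhibiting $J$ as an element of $\mathcal{T}_n$.

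I do not anticipate a real obstacle: the content is entirely bookkeeping with the dyadic block structure, and the conceptual point is simply that one partition of $\mathbb{N}$ into dyadic blocks simultaneously supplies the $2^n$-scaling of $i_n$, the left-to-right separation of successive children required by (3), and the nesting (1) via the doubling identity for consecutive blocks.
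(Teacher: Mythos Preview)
Your explicit dyadic-block construction is correct and verifies all three properties cleanly; the bookkeeping you outline goes through without issue.

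The paper takes a different route: it builds $\mathcal{T}_{n+1}$ from $\mathcal{T}_n$ inductively, at each already-chosen vertex $J$ selecting a sequence of grandchildren in $\mathcal{T}_n$ (one immediate descendant of each suitably spaced child) so that the separation condition (3) is maintained. Your construction is more transparent and makes the $2^n$-scaling and the nesting $\mathcal{T}_{n+1}\subseteq\mathcal{T}_n$ immediate via the identity $B_j^{(n+1)}=B_{2j-1}^{(n)}\cup B_{2j}^{(n)}$. The paper's inductive scheme, on the other hand, is deliberately flexible: exactly the same procedure is reused in the proof of the main theorem (the implication (ii)$\Rightarrow$(i)), where at each step the grandchild is chosen not arbitrarily but so as to satisfy an additional norm inequality furnished by the $(\beta)$-modulus. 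Your explicit $\mathcal{T}_n$'s do not allow that freedom of choice, so while your argument proves the lemma as stated, the paper's proof doubles as a warm-up for the self-improvement argument that follows.
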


\begin{proof}
Set $\mathcal{T}_0=\mathcal{T}$. Suppose $\mathcal{T}_n$ has been defined that satisfies (1)--(3), we define $\mathcal{T}_{n+1}$ as follows: first choose the root $\emptyset$. Suppose $J\in\mathcal{T}_n$ has been chosen. Let $\{J\cup J_i\}_{i=1}^\infty$ be the immediate descendants of $J$ in $\mathcal{T}_n$, where $|J_i|=2^n$, $\max J<\min J_i$ and $\max J_i<\min J_{i+1}$ for all $i$. Choose an arbitrary immediate descendant of $J\cup J_1$ in $\mathcal{T}_n$, denoted by $J\cup \bar{J}_1$, where $|\bar{J}_1|=2^{n+1}$ and $J_1<\bar{J}_1$. Set $k_1=1$ and suppose $J\cup\bar{J}_{k_j}$ has been chosen, since $\max J_i<\min J_{i+1}$ for all $i$, there exists $k_{j+1}\in\mathbb{N}$ such that $\min J_{k_{j+1}}>\max \bar{J}_{k_j}$. Choose an arbitrary immediate descendant of $J\cup J_{k_{j+1}}$ in $\mathcal{T}_n$, denoted by $J\cup \bar{J}_{k_{j+1}}$, where $|\bar{J}_{k_{j+1}}|=2^{n+1}$ and $J_{k_{j+1}}<\bar{J}_{k_{j+1}}$. Then $\{J\cup\bar{J}_{k_j}\}_{j=1}^\infty$ is a sequence of descendants of $J$ with height $|J|+2^{n+1}$ and satisfies $\max \bar{J}_{k_j}<\min\bar{J}_{k_{j+1}}$ for all $j$. Now the vertex set of $\mathcal{T}_{n+1}$ are those vertices chosen in this induction process. It is easy and left to the reader to check that $\{\mathcal{T}_n\}_{n=0}^\infty$ is a sequence that satisfies (1)--(3).
\end{proof}

We call such $\mathcal{T}_n$'s pruned isometric subsets of $\mathcal{T}$, where the subscript $n$ means $\mathcal{T}_n$ is $2^n$-isometric to $\mathcal{T}$ in the sense of (2), and $\mathcal{T}_0=\mathcal{T}$. In Lemma \ref{subtree} the sequence $\{\mathcal{T}_n\}_{n=0}^\infty$ are built from $\mathcal{T}$, but in the sequel we will need to build such a sequence starting from a fix pruned isometric subset $\mathcal{T}_k$, i.e., there is a sequence of subsets of $\mathcal{T}_k$, denoted by $\{\mathcal{T}_{k+n}\}_{n=0}^\infty$, which satisfies the following:
\begin{enumerate}[(1')]
\item $\mathcal{T}_{k+n}\supseteq\mathcal{T}_{k+n+1}$ for every $n\in\mathbb{N}\cup\{0\}$.
\item For every $n\in\mathbb{N}\cup\{0\}$, $\mathcal{T}_{k+n}$ is $2^{k+n}$-isometric to $\mathcal{T}$.
\item For every $n\in\mathbb{N}\cup\{0\}$ and every vertex $J\in\mathcal{T}_{k+n}$, the immediate descendants of $J$ in $\mathcal{T}_{k+n}$, denoted by $\{J\cup J_i\}_{i=1}^\infty$ where $|J_i|=2^{k+n}$ and $\max J<\min J_i$ for all $i$, satisfies $\max J_i<\min J_{i+1}$ for all $i$. 
\end{enumerate}

\medskip
$\mathcal{T}$ also plays a significant role in the metric characterization of Banach space properties. As mentioned in the introduction, it was proved in \cite{BKL2010} that within the class of reflexive Banach spaces, Rolewicz's property ($\beta$) can be characterized by Lipschitz embeddability of $\mathcal{T}$.

\begin{theorem}[\cite{BKL2010}]\label{betatree}
A reflexive Banach space $X$ DOES NOT have an equivalent norm with property ($\beta$) if and only $\mathcal{T}$ admits a Lipschitz embedding into $X$.
\end{theorem}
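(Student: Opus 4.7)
The theorem comprises two implications, and I would handle them by very different techniques. The direction that a Lipschitz embedding of $\mathcal{T}$ into $X$ rules out every equivalent norm with property $(\beta)$ does not need reflexivity, and I would attack it by a self-improvement argument driven by Lemma \ref{subtree}. Fix a Lipschitz embedding $f:\mathcal{T}\to X$ with distortion bounds $1/C\le\|f(I)-f(J)\|/d_{\mathcal{T}}(I,J)\le L$, and assume some equivalent norm has property $(\beta)$ with modulus $\bar\beta$; after absorbing equivalence constants we work with a single norm. At a vertex $J$ of a pruned subtree $\mathcal{T}_n$, the images of the immediate descendants of $J$ in $\mathcal{T}_n$ are pairwise $(2^{n+1}/C)$-separated, while a chosen deep descendant of one such child provides a recentering that places the configuration in a ball of radius proportional to $L$. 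Property $(\beta)$ then supplies one immediate descendant whose normalized distance from the recentering is at most $1-\bar\beta(t_0)$ for $t_0$ depending only on $C$ and $L$. This improves the effective upper Lipschitz constant from $L$ to $(1-\bar\beta(t_0))L$ along a further pruned subtree. Iterating in the sense of the primed clauses (1')--(3') stated after Lemma \ref{subtree}, the upper constant decays geometrically, eventually dropping below $1/C$ and contradicting the lower bound.

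For the converse, assume $X$ is reflexive and no equivalent norm on $X$ has property $(\beta)$. I would combine a renorming-theoretic translation of the hypothesis with a recursive tree construction. The failure of $(\beta)$ in every equivalent norm should yield, by a standard asymptotic-structure dichotomy available for reflexive spaces, a fixed $t_0>0$ such that for every $\epsilon>0$ one can find a witness $x,\{x_n\}\subset B_X$ with $\inf_{i\ne j}\|x_i-x_j\|\ge t_0$ and $\inf_n\|x-x_n\|/2>1-\epsilon$. Applying this inside $X$ with a summable sequence $\epsilon_k\downarrow 0$ at depth $k$, and using reflexivity to pass to weakly convergent subsequences at each vertex, I would build $f:\mathcal{T}\to X$ by assigning to the children of any already-placed $f(J)$ an infinite family coming from such a witness (translated and rescaled), so that sibling images stay uniformly separated while differences along a branch telescope into a controlled sum. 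The combinatorics of which children to keep would follow the pruning pattern of Lemma \ref{subtree}, guaranteeing that sibling separation persists under descent and that the upper and lower distortion estimates remain compatible across levels.

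The main obstacle is the converse implication, and specifically the step of obtaining a uniform asymptotic witness usable simultaneously at every vertex. Without some stabilization---either a renorming reduction that fixes a worst-case norm, or a Ramsey-type extraction along a branch---the constants $t_0$ and $\epsilon$ arising at different vertices could deteriorate, destroying bi-Lipschitz control on infinite branches. Reflexivity enters in two essential ways: first, to make the renorming-to-witness translation reversible so that the failure of $(\beta)$ genuinely produces an asymptotic configuration, and second, to ensure that the recursive weak-limit construction stays bounded in $X$, so the images $f(J)$ do not run off to infinity faster than the branch length permits.
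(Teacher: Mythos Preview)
The paper does not prove Theorem~\ref{betatree}; it is quoted from \cite{BKL2010} as a background result in the preliminaries and then used as a black box in the implication (i)$\Rightarrow$(ii) of Theorem~\ref{mcbeta}. So there is no in-paper proof to compare your proposal against.

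That said, your forward direction (a Lipschitz copy of $\mathcal{T}$ obstructs every $(\beta)$-renorming) is essentially the self-improvement argument that the paper \emph{does} carry out---not for Theorem~\ref{betatree} itself, but for (ii)$\Rightarrow$(i) of Theorem~\ref{mcbeta}. There the configuration is slightly different from what you describe: the ``handle'' is $\phi(J)$, the separated sequence consists of the images of the grandchildren $J\cup J_1\cup J_{1,i}$, and everything is centred at the intermediate point $\phi(J\cup J_1)$; property~$(\beta)$ then contracts the effective Lipschitz constant on a further pruned subtree by the fixed factor $1-\tfrac12\bar\beta_X(2\gamma)$. Your sketch has the right shape, though the specific ``deep descendant as recentering'' picture would need to be adjusted to fit the $(\beta)$ inequality.

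Your converse direction has a genuine gap. The phrase ``a standard asymptotic-structure dichotomy available for reflexive spaces'' is doing all the work and is left unspecified; the recursion you outline, with witnesses at each vertex and $\epsilon_k\downarrow 0$, does not by itself yield a uniform lower bound along infinite branches (the step vectors could partially cancel), which is exactly the obstacle you flag. In \cite{BKL2010} the argument is not a direct vertex-by-vertex witness construction of this kind: one passes through the renorming characterisation that a reflexive space admits a $(\beta)$-norm iff it admits both an asymptotically uniformly smooth and an asymptotically uniformly convex renorming, tied to the Szlenk indices of $X$ and $X^*$, and it is the failure of one of these that produces the bi-Lipschitz copy of $\mathcal{T}$. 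Without invoking that machinery (or something equivalent), your converse remains a plausible outline rather than a proof.
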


\section{A submetric characterization of property ($\beta$)}

The following is a submetric characterization of the class of Banach spaces admitting an equivalent norm with Rolewicz's property ($\beta$):

\begin{theorem}\label{mcbeta}
Let $X$ be a Banach space. The following assertions are equivalent:
\begin{enumerate}[(i)]
\item $X$ DOES NOT have an equivalent norm with property ($\beta$).
\item There exist a pruned isometric subset $\mathcal{T}_k$ of $\mathcal{T}$, $k\in\mathbb{N}\cup\{0\}$, a Lipschitz map $\phi:\mathcal{T}_k\to X$ and a constant $\gamma>0$, such that
for every $J\in\mathcal{T}_k$, and every pair of descendants $\bar{J}_1=J\cup J_1$, $\bar{J}_2=J\cup J_2$ of $J$ in $\mathcal{T}_k$ that satisfies $|J_1|=|J_2|$ and $\max J_1<\min J_2$, $$\|\phi(\bar{J}_1)-\phi(\bar{J}_2)\|\ge\gamma d_{\mathcal{T}}(\bar{J}_1,\bar{J}_2).$$
\end{enumerate}
\end{theorem}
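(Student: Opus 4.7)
The plan is to prove $(i)\Leftrightarrow(ii)$ by combining the Baudier--Kalton--Lancien characterization (Theorem \ref{betatree}) with a direct modulus argument, handling separately the case when $X$ is not reflexive.

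\medskip
For $(i)\Rightarrow(ii)$: Property $(\beta)$ is known to imply reflexivity, so (i) is automatic whenever $X$ is non-reflexive; in that case I would construct $\phi$ directly, exploiting the failure of weak sequential compactness of $B_X$ to produce a bounded sequence with enough separation to realize the sibling lower bound (via a ``cumulative-sum'' map from $\mathcal{T}_k$ to $X$). When $X$ is reflexive, Theorem \ref{betatree} produces a Lipschitz embedding $\psi:\mathcal{T}\to X$; since a Lipschitz embedding satisfies a bi-Lipschitz lower bound on \emph{every} pair, it in particular satisfies the weaker sibling condition required by (ii), and one takes $k=0$ and $\phi=\psi$.

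\medskip
For $(ii)\Rightarrow(i)$: I would argue by contraposition. Suppose $X$ admits an equivalent norm $|\cdot|$ with property $(\beta)$ and modulus $\bar{\beta}$. Then $X$ is reflexive, and after passing to $|\cdot|$ the map $\phi$ is still Lipschitz (with some constant $L$) and still satisfies $|\phi(\bar{J}_1)-\phi(\bar{J}_2)|\ge \gamma'\, d_{\mathcal{T}}(\bar{J}_1,\bar{J}_2)$ on the specified pairs. Property (3) of Lemma \ref{subtree} guarantees that the immediate descendants of any $J\in\mathcal{T}_k$ automatically satisfy the separation condition of (ii), so their $\phi$-images form a bounded, pairwise $(2\gamma'\cdot 2^k)$-separated sequence inside the ball of radius $L\cdot 2^k$ centered at $\phi(J)$. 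Applying the definition of $(\beta)$ to this sequence---with the reference taken to be the image of a previously selected vertex such as $\phi(\emptyset)$---produces an immediate descendant whose image is closer to the reference than the Lipschitz bound would allow, by a factor $1-\delta$ with $\delta=\bar{\beta}(2\gamma'/L)>0$. Iterating this selection down $N$ levels (after, if needed, a Ramsey-type refinement of $\mathcal{T}_k$ to ensure the separation condition of (ii) is preserved for the cousin pair arising at the bottom) builds a branch $\emptyset=J^0<J^1<\cdots<J^N$ along which $|\phi(J^n)-\phi(\emptyset)|$ is kept bounded by a constant $C$ independent of $N$. Running the same procedure from a different immediate descendant of $\emptyset$ yields a cousin $\hat{J}^N$ of $J^N$ with $\mathrm{gca}(J^N,\hat{J}^N)=\emptyset$; the triangle inequality gives $|\phi(J^N)-\phi(\hat{J}^N)|\le 2C$, while (ii) forces $|\phi(J^N)-\phi(\hat{J}^N)|\ge 2\gamma' N\cdot 2^k$, which is a contradiction for $N$ sufficiently large.

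\medskip
The main obstacle is to keep the iteration in the regime where the per-step contraction is nontrivial: the reference point $\phi(\emptyset)$ may drift outside the successive balls containing the children, in which case rescaling forces $\bar{\beta}$ to be evaluated at a smaller argument and the contraction factor degrades, so that a naive iteration does not produce a uniform bound $C$. This is precisely where the self-improvement argument of \cite{JS2009} enters. Rather than iterating the modulus on the fixed $\phi$, one first extracts from (ii) a refined submetric embedding on a deeper pruned isometric subset $\mathcal{T}_{k+n}$ ($n\ge 1$) in which the ratio of sibling separation to parent-ball radius has been boosted---trading a single step of the original iteration for the larger weight $2^{k+n}$ of the new subset---bringing the one-step $(\beta)$-inequality into a regime where the contraction factor is uniformly bounded below. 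Repeated refinement of this kind, exploiting the combinatorial flexibility afforded by property (3$'$) of the pruned isometric subsets, allows the iteration to close and the contradiction to be extracted at finite depth.
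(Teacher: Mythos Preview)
Your treatment of $(i)\Rightarrow(ii)$ matches the paper: the reflexive case is handled by Theorem~\ref{betatree}, and in the non-reflexive case the paper uses James's characterization to produce sequences $(x_j)\subset B_X$, $(x_j^*)\subset B_{X^*}$ with $x_m^*(x_n)=\theta$ for $m\le n$ and $0$ otherwise, then defines $\phi(J)=\sum_{j\in J}x_j$ --- exactly the ``cumulative-sum'' map you describe.

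For $(ii)\Rightarrow(i)$ you have the right ingredients (apply property~$(\beta)$ to separated siblings, then self-improve in the spirit of \cite{JS2009}), but the architecture is off in a way that hides the one step that actually makes the argument run. Taking $\phi(\emptyset)$ as the fixed reference point $x$ in the definition of $(\beta)$ is not what the paper does, and as you note it forces $\bar{\beta}$ to be evaluated at an argument that shrinks along the branch. The paper avoids this entirely by using a \emph{local} two-level configuration: for $J\in\mathcal{T}_{k+n}$ with immediate descendant $J\cup J_1$ and grandchildren $J\cup J_1\cup J_{1,i}$, both $\phi(J)$ and each $\phi(J\cup J_1\cup J_{1,i})$ lie in the ball of radius $\mathrm{Lip}(\phi|_{\mathcal{T}_{k+n}})\cdot 2^{k+n}$ about $\phi(J\cup J_1)$, while the grandchildren are pairwise $2\gamma\cdot 2^{k+n}$-separated by (ii). Applying $(\beta)$ with the \emph{grandparent} as $x$ and the \emph{grandchildren} as $(x_n)$, centered at the \emph{parent}, yields one grandchild $J\cup\bar{J}$ with $\|\phi(J)-\phi(J\cup\bar{J})\|\le \tau\cdot\mathrm{Lip}(\phi|_{\mathcal{T}_{k+n}})\cdot 2^{k+n+1}$ where $\tau=1-\tfrac12\bar{\beta}_X(2\gamma)$. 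Selecting such grandchildren at every vertex builds $\mathcal{T}_{k+n+1}$ with $\mathrm{Lip}(\phi|_{\mathcal{T}_{k+n+1}})\le\tau\cdot\mathrm{Lip}(\phi|_{\mathcal{T}_{k+n}})$. Thus the self-improvement \emph{is} the induction; it is not a preliminary step that rescues your fixed-reference iteration.

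Once this is in place, your two-branch depth-$N$ contradiction is unnecessary: on each $\mathcal{T}_{k+n}$ the hypothesis (ii) still gives $\|\phi(\bar{J}_1)-\phi(\bar{J}_2)\|\ge\gamma\,d_{\mathcal{T}}(\bar{J}_1,\bar{J}_2)$ for suitable siblings, while the Lipschitz bound gives $\|\phi(\bar{J}_1)-\phi(\bar{J}_2)\|\le\tau^n d_{\mathcal{T}}(\bar{J}_1,\bar{J}_2)$, so $\gamma\le\tau^n$ for all $n$, which is impossible since $\tau<1$. No Ramsey refinement is needed either: condition (ii) is stated for all well-separated equal-height descendant pairs in $\mathcal{T}_k$, and every $\mathcal{T}_{k+n}$ sits inside $\mathcal{T}_k$, so the lower bound persists automatically.
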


\begin{proof}
(i)$\Rightarrow$(ii): If $X$ is reflexive but does not have an equivalent norm with property ($\beta$), then it follows from Theorem \ref{betatree} that $\mathcal{T}$ admits a Lipschitz embedding into $X$. Note that the Lipschitz embedding satisfies (ii) for $k=0$, and we can assume that it has Lipschitz constant 1 so that $\gamma\in(0,1]$.

If $X$ is non-reflexive, fix $\theta\in(0,1)$, then it follows from James's characterization of reflexivity \cite{James} that there exists $\{x_j\}_{j=1}^\infty\subseteq B_X$ and $\{x^*_j\}_{j=1}^\infty\subseteq B_{X^*}$ such that $x^*_m(x_n)=\theta$ if $m\le n$ and $x^*_m(x_n)=0$ if $m>n$. Define $\phi:\mathcal{T}\to X$ by $\phi(J)=\sum_{j\in J}x_j$, $J\in\mathcal{T}$. Next we show that $\phi$ has Lipschitz constant at most 1 and satisfies (ii) for $k=0$ and $\gamma=\theta/2$:

If $J,\bar{J}\in\mathcal{T}$ and $J<\bar{J}$, let $l:=\min (\bar{J}\setminus J)$, then we have $$\|\phi(J)-\phi(\bar{J})\|=\Big\|\sum_{j\in\bar{J}\setminus J}x_j\Big\|\le |\bar{J}|-|J|=d_{\mathcal{T}}(J,\bar{J}),$$ and $$\|\phi(J)-\phi(\bar{J})\|=\Big\|\sum_{j\in\bar{J}\setminus J}x_j\Big\|\ge x^*_l\Big(\sum_{j\in\bar{J}\setminus J}x_j\Big)=(|\bar{J}|-|J|)\theta=\theta d_{\mathcal{T}}(J,\bar{J}).$$

Let $J\in\mathcal{T}_k$, $\bar{J}_1=J\cup J_1$ and $\bar{J}_2=J\cup J_2$ are descendants of $J$ in $\mathcal{T}_k$ that satisfies $|J_1|=|J_2|$ and $\max J_1<\min J_2$, let $k:=\min J_2$, then 
\begin{align*}
\|\phi(\bar{J}_1)-\phi(\bar{J}_2)\|=\Big\|\sum_{j\in J_2}x_j-\sum_{j\in J_1}x_j\Big\|&\ge x^*_k\Big(\sum_{j\in J_2}x_j-\sum_{j\in J_1}x_j\Big)\\
&=|J_2|\theta=\frac{\theta d_{\mathcal{T}}(\bar{J}_1,\bar{J}_2)}{2}.
\end{align*}

(ii)$\Rightarrow$(i): Suppose that $X$ has property ($\beta$), and there exist a pruned isometric subset $\mathcal{T}_k$ of $\mathcal{T}$, $k\in\mathbb{N}\cup\{0\}$, a Lipschitz map $\phi:\mathcal{T}_k\to X$ and a constant $\gamma>0$, such that (ii) holds. Without loss of generality we may assume that $\text{Lip}(\phi)=1$. Next we show that there is a sequence of subsets of $\mathcal{T}_k$, denoted by $\{\mathcal{T}_{k+n}\}_{n=0}^\infty$, that satisfies (1')-(3') in Section 2 and
$\text{Lip}(\left.\phi\right|_{\mathcal{T}_{k+n}})\le\tau^n$ for every $n\in\mathbb{N}\cup\{0\}$, where $\tau:=1-\frac{1}{2}\bar{\beta}_X(2\gamma)$.

The sequence $\{\mathcal{T}_{k+n}\}_{n=0}^\infty$ is defined in a similar way as that in Lemma \ref{subtree}.
Suppose $\mathcal{T}_{k+n}$ has been defined so that it has the above properties, we define $\mathcal{T}_{k+n+1}$ as follows: 
first choose the root $\emptyset$. Suppose $J\in\mathcal{T}_{k+n}$ has been chosen. Let $\{J\cup J_i\}_{i=1}^\infty$ be the immediate descendants of $J$ in $\mathcal{T}_{k+n}$, where $|J_i|=2^{k+n}$, $\max J<\min J_i$ and $\max J_i<\min J_{i+1}$ for all $i$. Consider the immediate descendants of $J\cup J_1$ in $\mathcal{T}_{k+n}$, denoted by $\{J\cup J_1\cup J_{1,i}\}_{i=1}^\infty$, where $|J_{1,i}|=2^{k+n}$, $\max J_1<\min J_{1,i}$ and $\max J_{1,i}<\min J_{1,i+1}$ for all $i$. Then we have
\begin{align*}
&\|\phi(J)-\phi(J\cup J_1)\|\le\tau^n 2^{k+n},\\
&\|\phi(J\cup J_1\cup J_{1,i})-\phi(J\cup J_1)\|\le\tau^n 2^{k+n},~i\in\mathbb{N}
\end{align*}

\vspace{-7mm}
\begin{align*}
\|\phi(J\cup J_1\cup J_{1,i})-\phi(J\cup J_1\cup J_{1,j})\|&\ge\gamma d_{\mathcal{T}}(J\cup J_1\cup J_{1,i},J\cup J_1\cup J_{1,j})\\
&=2^{k+n+1}\gamma,\hspace{4mm}i\neq j,
\end{align*}
Thus there exists $k_1\in\mathbb{N}$ such that
$$\|\phi(J)-\phi(J\cup J_1\cup J_{1,k_1})\|\le\tau^n 2^{k+n}\cdot2\Big(1-\frac{1}{2}\bar{\beta}_X\Big(\frac{2\gamma}{\tau^n}\Big)\Big)\le\tau^{n+1}2^{k+n+1},$$
where the second inequality follows from the fact that $\bar{\beta}_X(\cdot)$ is non-decreasing; denote $J\cup J_1\cup J_{1,k_1}:=J\cup\bar{J}_{k_1}$. Suppose $J\cup\bar{J}_{k_j}$ has been chosen, since $\max J_i<\min J_{i+1}$ for all $i$, there exists $k_{j+1}\in\mathbb{N}$ such that $\min J_{k_{j+1}}>\max \bar{J}_{k_j}$. Again we can choose an immediate descendant of $J\cup J_{k_{j+1}}$ in $\mathcal{T}_{k+n}$, denoted by $J\cup \bar{J}_{k_{j+1}}$ with $|\bar{J}_{k_{j+1}}|=2^{k+n+1}$ and $J_{k_{j+1}}<\bar{J}_{k_{j+1}}$, that satisfies
$$\|\phi(J)-\phi(J\cup \bar{J}_{k_{j+1}})\|\le\tau^{n+1}2^{k+n+1}.$$
Then the sequence $\{J\cup\bar{J}_{k_j}\}_{j=1}^\infty$ we have chosen consists of descendants of $J$ with height $|J|+2^{k+n+1}$, and for every $j\in\mathbb{N}$ one has $\max\bar{J}_{k_j}<\min\bar{J}_{k_j+1}$ and $$\|\phi(J)-\phi(J\cup \bar{J}_{k_j})\|\le\tau^{n+1}2^{k+n+1}.$$ The vertex set of $\mathcal{T}_{k+n+1}$ are those vertices chosen in this induction process.
Now it is easy to see that the sequence $\{\mathcal{T}_{k+n}\}_{n=0}^\infty$ has the desired properties, and it follows that $\gamma\le\tau^n$ for all $n\in\mathbb{N}\cup\{0\}$. Note that $\tau\in(0,1)$ since $X$ has property ($\beta$), we get a contradiction by letting $n\to\infty$.
\end{proof}

\begin{remark}\label{vmyold}
The original submetric characterization we have is the following:

\vspace{3mm}
\noindent (iii) There exist a pruned isometric subset $\mathcal{T}_k$ of $\mathcal{T}$, $k\in\mathbb{N}\cup\{0\}$, and a map $\phi:\mathcal{T}_k\to X$ that satisfies the following:

\vspace{1mm}
\begin{enumerate}[\hspace{5mm}(a)]
\item $\phi$ is an ancestor-to-descendant Lipschitz embedding, i.e., there exists $C,L>0$ such that for all $J<\bar{J}$ in $\mathcal{T}_k$, $$\frac{1}{C} d_{\mathcal{T}}(J,\bar{J})\le\|\phi(J)-\phi(\bar{J})\|\le Ld_{\mathcal{T}}(J,\bar{J}).$$

\item There exists $\gamma>0$ such that for every sequence $\{\mathcal{T}_{k+n}\}_{n=0}^\infty$ of subsets of $\mathcal{T}_k$ that satisfy (1')-(3') in section 2, and every $n\in\mathbb{N}\cup\{0\}$, $I,J\in\mathcal{T}_{k+n}$ with $I\neq J$ and $|I|=|J|$, $$\|\phi(I)-\phi(J)\|\ge 2^{k+n}\gamma.$$
\end{enumerate}

\vspace{1mm}
Shortly after we obtained the above submetric characterization (iii), we had a discussion with Florent Baudier. He made the observation that an improved version ((ii) in Theorem \ref{mcbeta}) can actually be derived from the work of Dilworth, Kutzarova, and Randrianarivony in \cite{DKR2016}. He also realized that the characterization can be proved using the notion of beta convexity, which is a metric analogue of property ($\beta$) introduced in his on-going work with Chris Gartland \cite{BG2021}.
\end{remark}

\section{Applications}

\subsection{Stability of property ($\beta$) under coarse Lipschitz embeddings}\ 

\vspace{1mm}
Our first application of the submetric characterization is the following stability result under coarse Lipschitz embeddings.

\begin{theorem}\label{clbeta}
Let $X$ be a Banach space that has an equivalent norm with property ($\beta$). If a Banach space $Y$ admits a coarse Lipschitz embedding into $X$, then $Y$ also has an equivalent norm with property ($\beta$).
\end{theorem}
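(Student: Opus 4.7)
The plan is to argue by contrapositive using the submetric characterization of Theorem \ref{mcbeta}. Suppose that $Y$ does not have an equivalent norm with property ($\beta$); by (i)$\Rightarrow$(ii) of Theorem \ref{mcbeta} we obtain a pruned isometric subset $\mathcal{T}_k$ of $\mathcal{T}$, a Lipschitz map $\phi:\mathcal{T}_k\to Y$ (which after rescaling we may assume is $1$-Lipschitz), and a constant $\gamma>0$ witnessing the separation inequality on same-height, ``max-before-min'' descendants. Let $f:Y\to X$ be the given coarse Lipschitz embedding, with constants $d\ge 0$ and $A,B>0$. The scheme is to transfer the witness from $Y$ to $X$ by post-composing with $f$ after passing to a sufficiently deep pruned isometric subset $\mathcal{T}_{k+m}\subseteq\mathcal{T}_k$, and then invoke (ii)$\Rightarrow$(i) of Theorem \ref{mcbeta} for $X$ to obtain a contradiction.

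Concretely, I would first choose $m\in\mathbb{N}$ so large that $2\gamma\cdot 2^{k+m}\ge d$, then select a pruned isometric subset $\mathcal{T}_{k+m}\subseteq\mathcal{T}_k$ as in properties (1')--(3') of Section~2, and set $\psi:=f\circ\phi\big|_{\mathcal{T}_{k+m}}:\mathcal{T}_{k+m}\to X$. Any two distinct vertices of $\mathcal{T}_{k+m}$ are at $d_\mathcal{T}$-distance at least $2^{k+m}$; this is the mechanism that will convert the merely coarse Lipschitz behaviour of $f$ into genuine Lipschitz behaviour of $\psi$.

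For the separation inequality, I would take $J\in\mathcal{T}_{k+m}$ and descendants $\bar{J}_1=J\cup J_1$, $\bar{J}_2=J\cup J_2$ of $J$ in $\mathcal{T}_{k+m}$ satisfying $|J_1|=|J_2|$ and $\max J_1<\min J_2$. Since these are also descendants of $J$ in $\mathcal{T}_k$ under the same conditions, the hypothesis on $\phi$ yields $\|\phi(\bar{J}_1)-\phi(\bar{J}_2)\|\ge\gamma\, d_\mathcal{T}(\bar{J}_1,\bar{J}_2)\ge 2\gamma\cdot 2^{k+m}\ge d$, and then the coarse Lipschitz lower bound on $f$ applies and gives $\|\psi(\bar{J}_1)-\psi(\bar{J}_2)\|\ge(\gamma/A)\, d_\mathcal{T}(\bar{J}_1,\bar{J}_2)$, which is exactly the condition required by (ii) of Theorem \ref{mcbeta} for $X$ with constant $\gamma/A$.

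The main obstacle is the Lipschitz upper bound on $\psi$, since the coarse Lipschitz hypothesis on $f$ only controls it linearly at scales exceeding $d$. The remedy is the standard ``additive-type'' estimate $\|f(y)-f(y')\|\le L\|y-y'\|+D$ valid for all $y,y'\in Y$, which follows from $f$ being Lipschitz for large distances (guaranteed by coarse continuity of $f$ together with metric convexity of $Y$ as noted at the end of Section~2.1) combined with $\omega_f(1)<\infty$. Using this together with $\text{Lip}(\phi)\le 1$ and the uniform lower bound $d_\mathcal{T}(I,J)\ge 2^{k+m}$ for distinct $I,J\in\mathcal{T}_{k+m}$, the additive slack $D$ is absorbed into a uniform multiple of $d_\mathcal{T}(I,J)$ and $\psi$ is seen to be Lipschitz (with constant essentially $L+D/2^{k+m}$). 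Invoking Theorem \ref{mcbeta} for $X$ with the triple $(\mathcal{T}_{k+m},\psi,\gamma/A)$ then contradicts the assumption that $X$ admits an equivalent norm with property ($\beta$).
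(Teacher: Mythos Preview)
Your argument is correct and follows the same contrapositive-and-compose strategy as the paper: obtain a witnessing map $\phi$ into $Y$ from Theorem~\ref{mcbeta}, post-compose with the coarse Lipschitz embedding $f$, restrict to a sufficiently deep pruned isometric subset, and invoke (ii)$\Rightarrow$(i) for $X$. The one meaningful difference is in how the Lipschitz upper bound on the composition is obtained. The paper goes back into the \emph{proof} of (i)$\Rightarrow$(ii) to extract an extra datum not present in the statement of (ii): the map $\phi$ can be taken on $\mathcal{T}_0=\mathcal{T}$ and also satisfies the ancestor-to-descendant \emph{lower} bound $\|\phi(J)-\phi(\bar J)\|\ge\gamma\,d_{\mathcal T}(J,\bar J)$ for $J<\bar J$. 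This guarantees $\|\phi(J)-\phi(\bar J)\|\ge 2^k\gamma>d$ for any comparable pair in $\mathcal T_k$, so the coarse Lipschitz upper bound on $f$ applies directly and yields $\text{Lip}(f\circ\phi|_{\mathcal T_k})\le B$ with no additive slack. You instead treat Theorem~\ref{mcbeta} as a black box, use only the $1$-Lipschitz upper bound on $\phi$, and compensate via the universal additive estimate $\|f(y)-f(y')\|\le L\|y-y'\|+D$, absorbing $D$ through the minimum pairwise distance $2^{k+m}$ in $\mathcal{T}_{k+m}$.

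One caution on your route: the paper's definition of a coarse Lipschitz embedding controls only distances $\ge d$ and does \emph{not} assume coarse continuity, so your appeal to ``coarse continuity of $f$'' and to $\omega_f(1)<\infty$ is not immediate from the hypotheses. It is, however, an easy consequence of $\text{Lip}_d(f)<\infty$ together with the metric convexity of $Y$ (for $\|y-y'\|<d$ pick $z$ with $\|y-z\|=2d$, so that both $\|y-z\|$ and $\|y'-z\|$ exceed $d$, and triangulate), and you should say so explicitly. With that one-line fix your argument is complete, and it has the pleasant feature of using Theorem~\ref{mcbeta} purely as a characterization rather than mining its proof.
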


\begin{proof}
Assume that $Y$ does not have an equivalent norm with property ($\beta$), then it follows from the proof of (i)$\Rightarrow$(ii) in Theorem \ref{mcbeta} that there exist $\gamma\in(0,1]$ and a map $\phi:\mathcal{T}\to Y$ that satisfy (a) and (b) as follows:
\begin{enumerate}[\hspace{4mm}(a)]
\item For all $J<\bar{J}$ in $\mathcal{T}$, $$\gamma d_{\mathcal{T}}(J,\bar{J})\le\|\phi(J)-\phi(\bar{J})\|\le d_{\mathcal{T}}(J,\bar{J}).$$
\item For every $J\in\mathcal{T}$, and every pair of descendants $\bar{J}_1=J\cup J_1$, $\bar{J}_2=J\cup J_2$ of $J$ that satisfies $|J_1|=|J_2|$ and $\max J_1<\min J_2$, $$\|\phi(\bar{J}_1)-\phi(\bar{J}_2)\|\ge\gamma d_{\mathcal{T}}(\bar{J}_1,\bar{J}_2).$$
\end{enumerate}
Denote $\phi(J):=v_J$ for $J\in\mathcal{T}$. 

Let $f:Y\to X$ be a coarse Lipschitz embedding, so there exists $d\ge0$ and $A,B>0$ such that for all $x,y\in Y$ with $\|x-y\|\ge d$,
$$\frac{1}{A}\|x-y\|\le\|f(x)-f(y)\|\le B\|x-y\|.$$
Choose $k\in\mathbb{N}$ such that $2^k\gamma>d$. Let $\mathcal{T}_k$ be a pruned isometric subset of $\mathcal{T}$. We claim that the map $g:=f\circ\left.\phi\right|_{\mathcal{T}_k}:\mathcal{T}_k\to X$, $g(J):=u_J$, $J\in\mathcal{T}_k$ satisfies
\begin{enumerate}
\item For all $J<\bar{J}$ in $\mathcal{T}_k$, $$\|g(J)-g(\bar{J})\|\le B d_{\mathcal{T}}(J,\bar{J}).$$
\item For every $J\in\mathcal{T}_k$, and every pair of descendants $\bar{J}_1=J\cup J_1$, $\bar{J}_2=J\cup J_2$ of $J$ in $\mathcal{T}_k$ that satisfies $|J_1|=|J_2|$ and $\max J_1<\min J_2$, $$\|g(\bar{J}_1)-g(\bar{J}_2)\|\ge\frac{\gamma d_{\mathcal{T}}(\bar{J}_1,\bar{J}_2)}{A}.$$
\end{enumerate}

\begin{proof}[Proof of the claim]\renewcommand{\qedsymbol}{}\ 

\vspace{1mm}
Note that for $J<\bar{J}$ in $\mathcal{T}_k$, we have $$\|v_J-v_{\bar{J}}\|\ge\gamma d_{\mathcal{T}}(J,\bar{J})\ge2^k\gamma>d,$$ and thus
$$\|u_J-u_{\bar{J}}\|\le B\|v_J-v_{\bar{J}}\|\le Bd_{\mathcal{T}}(J,\bar{J}),$$
which implies that $\text{Lip}(g)\le B$.

On the other hand, let $J\in\mathcal{T}_k$, and $\bar{J}_1=J\cup J_1$ and $\bar{J}_2=J\cup J_2$ are descendants of $J$ in $\mathcal{T}_k$ that satisfies $|J_1|=|J_2|$ and $\max J_1<\min J_2$, we have $$\|v_{\bar{J}_1}-v_{\bar{J}_2}\|\ge\gamma d_{\mathcal{T}}(\bar{J}_1,\bar{J}_2)\ge2^{k+1}\gamma>d,$$
which implies that 
$$\|u_{\bar{J}_1}-u_{\bar{J}_2}\|\ge\frac{1}{A}\|v_{\bar{J}_1}-v_{\bar{J}_2}\|\ge\frac{\gamma d_{\mathcal{T}}(\bar{J}_1,\bar{J}_2)}{A},$$
and this finishes the proof of the claim.
\end{proof}

Now it follows from Theorem \ref{mcbeta} (ii)$\Rightarrow$(i) that $X$ does not have an equivalent norm with property ($\beta$); this is a contradiction.
\end{proof}

\subsection{Stability of property ($\beta$) under nonlinear quotients}\ 

\vspace{1mm}
The notion of property ($\beta$) has been found very useful in the study of nonlinear quotients of Banach spaces. Lima and Randrianarivony \cite{LimaR2012} first brought in property ($\beta$) to study unifrom quotients of $\ell_p$ spaces. They proved that $\ell_q$ is not a uniform quotient of $\ell_p$ for $1<p<q$, and the main technique used is call the ``fork argument''. The ``fork'' here describes a configuration of points just as the $x$ and $\{x_n\}_{n=1}^\infty$ in the definition of property ($\beta$), and the ``fork argument" is a process of nonlinear lifting of points sitting approximately in such a position. Later, Baudier and Zhang \cite{BZ2016} proved the same result by estimating the $\ell_p$-distortion of the countably branching trees. Their proof shows that the nonexistence of nonlinear quoitent maps from $\ell_p$ to $\ell_q$ is actually due to the discrepancy between their modulus of property ($\beta$). Then it becomes a natural question whether the property ($\beta$) is preserved (up to renorming) under nonlinear quotients. Dilworth, Kutzarova and Randrianarivony \cite{DKR2016} gave an affirmative answer to the question in the uniform category. They applied the ``fork argument" to uniform quotient maps onto a diamond-type graph called parasol graph, and proved that if a separable Banach space $Y$ is a uniform quotient of a Banach space admitting an equivalent norm with property ($\beta$), then $Y$ also has an equivalent norm with property ($\beta$). Recently it was shown in \cite{DKLRbeta} that the separable condition can be removed.

The goal of this subsection is to provide a stability theorem of property ($\beta$) under nonlinear quotients in the coarse category. Our approach, which works as well in the context of uniform quotient, does not require the use of the parasol graph or the ``fork argument". Instead, we will apply the submetric characterization of property ($\beta$) and use the same technique as that in the proof of Theorem \ref{clbeta}.

We recall from \cite{BJLPS1999} that a map $f:X\to Y$ between metric spaces $X$ and $Y$ is called co-uniformly continuous if for every $d>0$, there exists $\delta=\delta(d)>0$ such that for all $x\in X$, $$B_Y(f(x),\delta)\subseteq f(B_X(x,d)).$$ If the $\delta$ satisfies $\delta\ge d/C$ for some constant $C>0$ independent of $d$, then the map $f$ is said to be co-Lipschitz.
A uniform (resp. Lipschitz) quotient map is a co-uniformly continuous (resp. co-Lipschitz) map that is also uniformly continuous (resp. Lipschitz), and we say that $Y$ is a uniform (resp. Lipschitz) quotient of $X$ if there exists a uniform (resp. Lipschitz) quotient map from $X$ to $Y$.

The notion of coarse quotient was introduced by Zhang in \cite{Zhang2015}. A map $f:X\to Y$ between metric spaces $X$ and $Y$ is called co-coarsely continuous if there is a constant $K\ge0$ that satisfies the following: for every $d>K$, there exists $\delta=\delta(d)>0$ such that for all $x\in X$, $$B_Y(f(x),d)\subseteq f(B_X(x,\delta))^K.$$
Here for a set $A$ in a metric space $X$, $A^K:=\bigcup_{a\in A}B_X(a,K)$.
A coarse quotient map is a co-coarsely continuous map that is also coarsely continuous, and we say that $Y$ is a coarse quotient of $X$ if there exists a coarse quotient map from $X$ to $Y$.

Co-Lipschitz maps are surjective. A co-uniformly continuous map $f:X\to Y$ is surjective if the target space $Y$ is connected.
For a co-coarsely continuous map $f:X\to Y$, we only have $Y=f(X)^K$ for some $K\ge0$. Therefore, if the target space of a coarse quotient map $f$ is unbounded, one must have $\text{Lip}_d(f)>0$ for all $d>0$.

The lemma below says that co-uniformly continuous maps and co-coarsely continuous maps are ``co-Lipschtiz for large distances'' provided the target space is metrically convex. We refer to \cite{BJLPS1999,Zhang2018} for the proof.  

\begin{lemma}\label{coLiplarge}
Let $f:X\to Y$ be a map from a metric space $X$ to a metrcially convex metric space $Y$.
\begin{enumerate}
\item If $f$ is co-uniformly continuous, then for every $d>0$, there exists $C=C(d)>0$ such that for all $x\in X$ and $r\ge d$, $$B_Y(f(x),r/C)\subseteq f(B_X(x,r)).$$
\item If $f$ is co-coarsely continuous, then there exists a constant $K\ge0$ that satisfies the following: for every $d>K$, there exists $C=C(d)>0$ such that for all $x\in X$ and $r\ge d$, $$B_Y(f(x),r)\subseteq f(B_X(x,Cr))^K.$$
\end{enumerate}
\end{lemma}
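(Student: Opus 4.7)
The plan is to prove both parts by the classical chaining argument enabled by the metric convexity of the target space $Y$: connect $f(x)$ to any target point $y$ close to $f(x)$ by a finely spaced sequence of points on a metric geodesic, then lift each short segment to $X$ using the co-uniform (resp.\ co-coarse) continuity hypothesis, and control the total displacement in $X$ by the total length of the chain on the $X$-side.

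For part (1), I would fix $d > 0$ and let $\delta := \delta(d)$ be the modulus coming from co-uniform continuity, so that $B_Y(f(z), \delta) \subseteq f(B_X(z, d))$ for every $z \in X$. Given $r \ge d$ and a target point $y$ with $s := d_Y(f(x), y) \le r/C$, metric convexity produces a chain $y_0 = f(x), y_1, \ldots, y_N = y$ with $d_Y(y_{i-1}, y_i) \le \delta$ and $N = \lceil s/\delta \rceil$. Each $y_i$ is then lifted to an exact preimage $x_i$ with $d_X(x_{i-1}, x_i) \le d$, giving $d_X(x, x_N) \le Nd \le (s/\delta + 1)d$. Choosing $C = 2d/\delta$ and treating the trivial case $s \le \delta$ separately (a single application of co-uniform continuity suffices) forces $d_X(x, x_N) \le r$, which is the desired inclusion.

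For part (2), the same outline applies, except the lifts are only approximate: given $y' \in B_Y(f(z), d)$ with $d > K$, one only obtains $x' \in B_X(z, \delta)$ with $d_Y(f(x'), y') \le K$ rather than equality. The key idea is to fix the working scale at a single value $d > K$ and use spacer $\eta := d - K > 0$ on the $Y$-side. Then at every step in the induction one has
\[
d_Y(f(x_{i-1}), y_i) \le d_Y(f(x_{i-1}), y_{i-1}) + d_Y(y_{i-1}, y_i) \le K + \eta = d,
\]
so co-coarse continuity at scale $d$ yields $x_i \in B_X(x_{i-1}, \delta(d))$ with $d_Y(f(x_i), y_i) \le K$. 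Summing gives $d_X(x, x_N) \le N\delta(d) \le Cr$ for $C := 2\delta(d)/(d - K)$, and the final approximate lift $d_Y(y, f(x_N)) \le K$ yields $y \in f(B_X(x, Cr))^K$.

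The only subtlety that requires any thought is the choice of the spacer $\eta$ in part (2): one must ensure that the $K$-error inherited from the previous step is absorbed into the radius at which co-coarse continuity is invoked, and the calibration $\eta + K = d$ is precisely what prevents the error from accumulating across the $N$ steps. Aside from this piece of bookkeeping, both parts reduce to elementary quantitative estimates in metric geometry.
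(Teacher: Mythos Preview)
Your chaining argument is correct and is precisely the standard proof; note that the paper itself does not give a proof of this lemma but simply refers the reader to \cite{BJLPS1999,Zhang2018}, where exactly this kind of argument appears. There is nothing to compare.
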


\begin{corollary}\label{nq}
Let $X$ and $Y$ be two metric spaces and $f:X\to Y$ be a map that is co-uniformly continuous or co-coarsely continuous. Assume that $Y$ is metrically convex, then there exist $K>0$ and $C>0$ such that for all $x\in X$ and $r>0$, $$B_Y(f(x),r)\subseteq f(B_X(x,Cr))^K.$$
\end{corollary}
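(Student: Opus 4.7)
The plan is to reduce Corollary \ref{nq} directly to Lemma \ref{coLiplarge} by splitting on the size of $r$: for $r$ large enough that the relevant part of Lemma \ref{coLiplarge} applies, the desired inclusion is essentially handed to us; for small $r$, the thickening by $K$ absorbs the inclusion trivially, because $B_Y(f(x),r)$ is itself tiny while $f(x)$ already lies in $f(B_X(x,Cr))$ (since $x\in B_X(x,Cr)$). The key observation in both regimes is that whenever $K\ge r$, one automatically has
$$B_Y(f(x),r)\subseteq B_Y(f(x),K)\subseteq f(B_X(x,Cr))^K,$$
regardless of the value of $C$.

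First I would treat the co-uniformly continuous case. Fix any convenient $d>0$ and apply Lemma \ref{coLiplarge}(1) to extract a constant $C=C(d)$ such that, after reparameterization, $B_Y(f(x),r)\subseteq f(B_X(x,Cr))$ holds for every $r\ge d/C$. Then I would set $K:=d/C$. For $r\ge K$ the inclusion holds even without any thickening, while for $r<K$ the trivial observation above gives $B_Y(f(x),r)\subseteq f(B_X(x,Cr))^K$.

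The co-coarsely continuous case proceeds along the same lines. Letting $K_0\ge 0$ be the constant from Lemma \ref{coLiplarge}(2) and fixing any $d>K_0$, the lemma yields a constant $C=C(d)$ such that $B_Y(f(x),r)\subseteq f(B_X(x,Cr))^{K_0}$ for all $r\ge d$. Choosing $K:=\max\{K_0,d\}$ handles the large-$r$ regime (the thickening only grows from $K_0$ to $K$), and the small-$r$ regime $r<d$ is again absorbed by the trivial inclusion $B_Y(f(x),r)\subseteq B_Y(f(x),K)\subseteq f(B_X(x,Cr))^K$.

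I do not anticipate any real obstacle; the argument is essentially bookkeeping of constants, and the metric convexity of $Y$ is used only implicitly through Lemma \ref{coLiplarge}. The one subtlety worth flagging is the need to commit to a single pair $(C,K)$ at the outset that works uniformly in $r$, and this is precisely what the trick of enlarging $K$ to dominate the threshold $d/C$ (respectively $d$) accomplishes.
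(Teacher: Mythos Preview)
Your proposal is correct and follows essentially the same approach as the paper: both arguments split on the size of $r$, invoking Lemma~\ref{coLiplarge} for large $r$ and using the trivial inclusion $B_Y(f(x),r)\subseteq B_Y(f(x),K)\subseteq f(B_X(x,Cr))^K$ (via $f(x)\in f(B_X(x,Cr))$) for small $r$. Your choices $K=d/C$ in the co-uniform case and $K=\max\{K_0,d\}=d$ in the co-coarse case coincide exactly with the constants the paper selects.
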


\begin{proof}
If $f$ is co-uniformly continuous, we fix $d>0$, then it follows from Lemma \ref{coLiplarge} (1) that there exists $C=C(d)>0$ such that for all $x\in X$ and $r\ge d$, $$B_Y(f(x),r/C)\subseteq f(B_X(x,r))\subseteq f(B_X(x,r))^{d/C}.$$ Thus for all $x\in X$ and $r>0$, $$B_Y(f(x),r/C)\subseteq f(B_X(x,r))^{d/C},$$ and the result follows.
	
Similarly, if $f$ is co-coarsely continuous, note that the constant $C$ in Lemma \ref{coLiplarge} (2) satisfies $B_Y(f(x),r)\subseteq f(B_X(x,Cr))^d$ for all $x\in X$ and $r>0$, again the result follows.
\end{proof}

\begin{theorem}\label{nqbeta}
Let $X$ be a Banach space that has an equivalent norm with property ($\beta$). If a Banach space $Y$ is a uniform or coarse quotient of a subset of $X$, where the quotient map is Lipschitz for large distances, then $Y$ also has an equivalent norm with property ($\beta$).
\end{theorem}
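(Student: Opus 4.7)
The plan is to adapt the argument of Theorem \ref{clbeta}, but to replace the direct composition $f\circ\phi$ by an inductive lifting of $\phi$ through the quotient map. Suppose for contradiction that $Y$ does not admit an equivalent norm with property ($\beta$). Then, as in the proof of Theorem \ref{clbeta}, Theorem \ref{mcbeta} (i)$\Rightarrow$(ii) yields a constant $\gamma\in(0,1]$ and a map $\phi:\mathcal{T}\to Y$ that is an ancestor-to-descendant Lipschitz embedding with constants $\gamma$ and $1$, and that satisfies the fork inequality with constant $\gamma$. Denote the quotient map by $f:Z\to Y$ with $Z\subseteq X$. Fix an auxiliary $d_0>0$ and set $B:=\text{Lip}_{d_0}(f)<\infty$ and $M:=\omega_f(d_0)<\infty$; the latter is finite since $f$ is Lipschitz for large distances and is (uniformly or coarsely) continuous. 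Because $Y$ is metrically convex, Corollary \ref{nq} supplies constants $K,C>0$ such that $B_Y(f(z),r)\subseteq f(B_Z(z,Cr))^K$ for every $z\in Z$ and $r>0$; in particular $Y=f(Z)^K$.

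The key step is to construct, for a sufficiently deep pruned isometric subset $\mathcal{T}_k$, a map $g:\mathcal{T}_k\to X$ by lifting $\phi|_{\mathcal{T}_k}$ through $f$. I would choose $k\in\mathbb{N}$ so that $2^k\gamma>\max\{4K,\,M+2K\}$, pick $u_\emptyset\in Z$ with $\|f(u_\emptyset)-\phi(\emptyset)\|\le K$, and then proceed inductively: given $u_{J'}$ for $J'\in\mathcal{T}_k$ and an immediate descendant $J\in\mathcal{T}_k$ of $J'$, the estimate $\|\phi(J)-f(u_{J'})\|\le\|\phi(J)-\phi(J')\|+K\le 2^k+K$ allows us to apply Corollary \ref{nq} with $r=2^k+K$ to produce $u_J\in B_Z(u_{J'},C(2^k+K))$ with $\|f(u_J)-\phi(J)\|\le K$; set $g(J):=u_J$.

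It then remains to verify the two conditions of Theorem \ref{mcbeta} (ii). The inductive bound $\|u_J-u_{J'}\|\le C(2^k+K)$ for immediate descendants telescopes along ancestor-descendant chains in $\mathcal{T}_k$ and, via the greatest common ancestor, passes to all pairs, giving $\text{Lip}(g)\le C(1+K/2^k)$. For a fork pair $\bar J_1,\bar J_2$ in $\mathcal{T}_k$, the triangle inequality produces $\|f(u_{\bar J_1})-f(u_{\bar J_2})\|\ge\gamma\, d_{\mathcal{T}}(\bar J_1,\bar J_2)-2K$; by the choice of $k$ this quantity exceeds $M$ and is also at least $\tfrac12\gamma\, d_{\mathcal{T}}(\bar J_1,\bar J_2)$. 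The first consequence rules out $\|u_{\bar J_1}-u_{\bar J_2}\|\le d_0$, and then the Lipschitz-for-large-distances property delivers $\|u_{\bar J_1}-u_{\bar J_2}\|\ge\gamma\, d_{\mathcal{T}}(\bar J_1,\bar J_2)/(2B)$. Applying Theorem \ref{mcbeta} (ii)$\Rightarrow$(i) to $g$ then contradicts the assumption that $X$ has an equivalent norm with property ($\beta$).

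The main obstacle I anticipate is controlling the additive error $K$ produced at every lifting step: it pollutes both the upper Lipschitz estimate, through the telescoping factor $C(1+K/2^k)$, and the fork lower estimate, through the $-2K$ correction. The remedy is to exploit the freedom to descend to a deeply pruned $\mathcal{T}_k$: since all tree distances in $\mathcal{T}_k$ are multiples of $2^k$, taking $k$ large absorbs the additive losses into harmless multiplicative constants, which is exactly the flexibility the submetric characterization in Theorem \ref{mcbeta} provides.
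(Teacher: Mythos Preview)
Your proposal is correct and follows essentially the same approach as the paper's proof: both argue by contradiction, invoke Theorem~\ref{mcbeta} to produce $\phi:\mathcal{T}\to Y$, use Corollary~\ref{nq} to lift $\phi|_{\mathcal{T}_k}$ inductively through the quotient map to a map $g:\mathcal{T}_k\to X$, and then verify the Lipschitz upper bound and fork lower bound for $g$ by absorbing the additive error $K$ into the scale $2^k$. The only differences are cosmetic bookkeeping (your threshold $2^k\gamma>\max\{4K,\,M+2K\}$ versus the paper's $2^k\gamma>\omega_f(d)+2K$, and your Lipschitz constant $C(1+K/2^k)$ versus the paper's $2C$).
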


\begin{proof}
Assume that $Y$ does not have an equivalent norm with property ($\beta$), then it follows from the proof of (i)$\Rightarrow$(ii) in Theorem \ref{mcbeta} that there exist $\gamma\in(0,1]$ and a map $\phi:\mathcal{T}\to Y$ that satisfy (a) and (b) as follows:
\begin{enumerate}[\hspace{4mm}(a)]
\item For all $J<\bar{J}$ in $\mathcal{T}$, $$\gamma d_{\mathcal{T}}(J,\bar{J})\le\|\phi(J)-\phi(\bar{J})\|\le d_{\mathcal{T}}(J,\bar{J}).$$
\item For every $J\in\mathcal{T}$, and every pair of descendants $\bar{J}_1=J\cup J_1$, $\bar{J}_2=J\cup J_2$ of $J$ that satisfies $|J_1|=|J_2|$ and $\max J_1<\min J_2$, $$\|\phi(\bar{J}_1)-\phi(\bar{J}_2)\|\ge\gamma d_{\mathcal{T}}(\bar{J}_1,\bar{J}_2).$$
\end{enumerate}
Denote $\phi(J):=v_J$ for $J\in\mathcal{T}$. 

Let $S$ be a subset of $X$ and $f:S\to Y$ be a uniform or coarse quotient map that is Lipschitz for large distances. By Corollary \ref{nq}, there exist $K>0$ and $C>0$ such that for all $x\in S$ and $r>0$, 
\begin{align}
B_Y(f(x),r)\subseteq f(B_S(x,Cr))^K.\label{nqcoLip}
\end{align}
Let $d>0$ satisfy $\text{Lip}_d(f)\in (0,\infty)$ and $\omega_f(d)<\infty$. Choose $k\in\mathbb{N}$ such that $2^k\gamma>\omega_f(d)+2K$. Let $\mathcal{T}_k$ be a pruned isometric subset of $\mathcal{T}$. We define a map $g:\mathcal{T}_k\to X$ as follows: first choose any $u_\emptyset\in S$ such that $\|v_\emptyset-f(u_\emptyset)\|\le K$. Suppose $u_J\in S$ has been defined for $J\in\mathcal{T}_k$ such that $\|v_J-f(u_J)\|\le K$. Let $\bar{J}$ be an immediate descendant of $J$ in $\mathcal{T}_k$. It follows from \eqref{nqcoLip} that $$v_{\bar{J}}\in B_Y\big(f(u_J),\|v_{\bar{J}}-v_J\|+K\big)\subseteq f\big(B_S\big(u_J,C\big(\|v_{\bar{J}}-v_J\|+K\big)\big)\big)^K,$$
so there exists $u_{\bar{J}}\in S$ such that $\|u_{\bar{J}}-u_J\|\le C\big(\|v_{\bar{J}}-v_J\|+K\big)$ and $\|v_{\bar{J}}-f(u_{\bar{J}})\|\le K$. This induction process defines a $u_J\in S$ for every $J\in\mathcal{T}_k$; thus $g:\mathcal{T}_k\to X$, $g(J):=u_J$, $J\in\mathcal{T}_k$ is a well-defined map. We claim that the map $g$ satisfies
\begin{enumerate}[(i)]
\item For all $J<\bar{J}$ in $\mathcal{T}_k$, $$\|g(J)-g(\bar{J})\|\le 2C d_{\mathcal{T}}(J,\bar{J}).$$
\item For every $J\in\mathcal{T}_k$, and every pair of descendants $\bar{J}_1=J\cup J_1$, $\bar{J}_2=J\cup J_2$ of $J$ in $\mathcal{T}_k$ that satisfies $|J_1|=|J_2|$ and $\max J_1<\min J_2$, $$\|g(\bar{J}_1)-g(\bar{J}_2)\|\ge\frac{\gamma d_{\mathcal{T}}(\bar{J}_1,\bar{J}_2)}{2\text{Lip}_d(f)}.$$
\end{enumerate}
\begin{proof}[Proof of the claim]\renewcommand{\qedsymbol}{}\

\vspace{1mm}
If $\bar{J}$ is an immediate descendant of $J$ in $\mathcal{T}_k$, we have
$$\|u_J-u_{\bar{J}}\|\le C\big(\|v_J-v_{\bar{J}}\|+K\big)\le C(2^k+K)<2^{k+1}C=2C d_{\mathcal{T}}(J,\bar{J}),$$ which implies that $g$ is Lipschitz with Lipschitz constant at most $2C$.

On the other hand, let $J\in\mathcal{T}_k$, and $\bar{J}_1=J\cup J_1$ and $\bar{J}_2=J\cup J_2$ are descendants of $J$ in $\mathcal{T}_k$ that satisfies $|J_1|=|J_2|$ and $\max J_1<\min J_2$, we must have $\|u_{\bar{J}_1}-u_{\bar{J}_2}\|\ge d$, otherwise $$2^{k+1}\gamma\le\gamma d_{\mathcal{T}}(\bar{J}_1,\bar{J}_2)\le\|v_{\bar{J}_1}-v_{\bar{J}_2}\|\le\|f(u_{\bar{J}_1})-f(u_{\bar{J}_2})\|+2K\le\omega_f(d)+2K,$$ which contradicts the choice of $k$. Therefore, 
\begin{align*}
\|u_{\bar{J}_1}-u_{\bar{J}_2}\|&\ge\frac{1}{\text{Lip}_d(f)}\|f(u_{\bar{J}_1})-f(u_{\bar{J}_2})\|\\
&\ge\frac{1}{\text{Lip}_d(f)}\big(\|v_{\bar{J}_1}-v_{\bar{J}_2}\|-2K\big)\\
&\ge\frac{1}{\text{Lip}_d(f)}(\gamma d_{\mathcal{T}}(\bar{J}_1,\bar{J}_2)-2K)\ge\frac{\gamma d_{\mathcal{T}}(\bar{J}_1,\bar{J}_2)}{2\text{Lip}_d(f)},
\end{align*}
and this finishes the proof of the claim.
\end{proof}

Now it follows from Theorem \ref{mcbeta} (ii)$\Rightarrow$(i) that $X$ does not have an equivalent norm with property ($\beta$); this is a contradiction.
\end{proof}

\begin{remark}
In Theorem \ref{nqbeta}, if $Y$ is a coarse quotient of a subset of $X$, then it is enough to assume that the coarse quotient map is coarse Lipschitz rather than Lipschitz for large distances.
\end{remark}

\begin{corollary}
Let $X$ be a Banach space admitting an equivalent norm with property ($\beta$). If a Banach space $Y$ is (i) a Lipschitz quotient of a subset of $X$; or (ii) a uniform quotient of $X$; or (iii) a coarse quotient of $X$, then $Y$ also has an equivalent norm with property ($\beta$).
\end{corollary}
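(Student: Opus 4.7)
The plan is to reduce each of the three cases to Theorem \ref{nqbeta}, whose conclusion is precisely that $Y$ has an equivalent norm with property ($\beta$) whenever $Y$ is a uniform or coarse quotient of a subset of $X$ via a map that is Lipschitz for large distances. Thus, in each case (i), (ii), (iii), it suffices to verify (a) that the relevant quotient map is a uniform or coarse quotient map and (b) that it is Lipschitz for large distances; the rest is handed to Theorem \ref{nqbeta}.

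For case (i), let $f: S\to Y$ be a Lipschitz quotient map defined on a subset $S\subseteq X$. By definition $f$ is co-Lipschitz and Lipschitz, hence it is in particular co-uniformly continuous, so a uniform quotient map. Since $f$ is Lipschitz we have $\text{Lip}_d(f)\le\text{Lip}(f)<\infty$ for all $d>0$, so $f$ is trivially Lipschitz for large distances. Theorem \ref{nqbeta} then applies and $Y$ has an equivalent norm with property ($\beta$).

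For cases (ii) and (iii), the key point is that the source space is all of $X$, which as a Banach space is metrically convex. Recall from Section 2 the standard fact that uniformly continuous maps and coarsely continuous maps defined on a metrically convex metric space are automatically Lipschitz for large distances. So if $f: X\to Y$ is a uniform quotient map, it is uniformly continuous on the metrically convex space $X$ and therefore Lipschitz for large distances; similarly, a coarse quotient map $f: X\to Y$ is coarsely continuous on $X$ and hence Lipschitz for large distances. In either case we may take $S=X$ in Theorem \ref{nqbeta} (or alternatively in (iii) one may invoke the Remark after Theorem \ref{nqbeta}) to conclude that $Y$ admits an equivalent norm with property ($\beta$).

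There is no real obstacle here; the corollary is essentially a bookkeeping exercise that collects the three standard flavors of nonlinear quotient under the single umbrella of Theorem \ref{nqbeta}. The only point that needs a brief word is that metric convexity of the source is what forces continuity into Lipschitz-for-large-distances behavior in cases (ii) and (iii); in case (i) the Lipschitz hypothesis on the quotient map itself makes the argument even more direct and allows the source to be an arbitrary subset of $X$.
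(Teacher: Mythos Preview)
Your proof is correct and is exactly the intended reduction: the paper leaves the corollary without an explicit proof, and your verification that each of the three kinds of quotient maps is Lipschitz for large distances (trivially in case (i), and via metric convexity of $X$ in cases (ii) and (iii)) is precisely what is needed to invoke Theorem~\ref{nqbeta}.
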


\noindent
{\bf Acknowledgement.} The author would like thank Florent Baudier and Chris Gartland for very helpful discussion which improved Theorem \ref{mcbeta}. See Remark \ref{vmyold}.

\bibliographystyle{amsplain}

\end{document}